\numberwithin{equation}{section}
\newtheorem{theorem}{Theorem}[section]
\newtheorem{lemma}{Lemma}[section]
\newtheorem{proposition}{Proposition}[section]
\newtheorem{definition}{Definition}[section]
\theoremstyle{remark}
\date{}
\title{\textbf{On the geometry of Clairaut warped product Riemannian maps}}
\author{Jyoti Yadav and Gauree Shanker\thanks{corresponding author, Email: gauree.shanker@cup.edu.in}}
\begin{document}
	\maketitle
	\begin{abstract}
In this paper, we introduce Clairaut warped product Riemannian maps. To study these kind of maps, first, we find the condition of geodesic of a regular curve. Then we obtain the conditions for a warped product Riemannian map to be Clairaut warped product Riemannian map. Further, we find the curvature tensor for this map.		
	\end{abstract}
	\noindent\textbf{Mathematics Subject Classification}. 53B20, 53C42, 53C55\\
	\textbf{Keywords}. Riemannian maps, Clairaut Riemannian maps, warped product Riemannian submersion, warped product Riemannian maps.
	
	\section{Introduction}
	As a generlizations of product manifold, warped product manifold was defined by Bishop and O'Neill \cite{J16}. With the help of warped product, one can construct new examples of manifolds of negative curvature by defining convex function on this. Schwarzschild solution
	and Robertson-Walker models are the well known examples of warped products. In Riemannian geometry, de Rham theorem tells that Riemannian manifold can be locally decomposed into product manifold. After this, Moore \cite{J17} demonstrated the sufficient conditions for an isometric immersions into a Euclidean space to decompose into a product imersions. 
	The definition of warped product immersion is as follows. 
	Let $\phi_i: M_i\rightarrow N_i$ be isometric immersions from $M_i$ to $N_i$ for $i= 1,...,k.$ Let $M_1\times_{f_1} M_2\times_{f_2},...,\times_{f_k}M_k$ and $N_1\times_{\rho_1} N_2\times_{\rho_2},...,\times \rho_kN_k$ be warped product manifolds, where $\rho_i:N_i\rightarrow\mathbb{R}^+$ and $ f_i:\rho_i\circ\phi_i:M_i\rightarrow\mathbb{R}^+$ for $i=1,...,k-1.$ Then the smooth map $\phi:M_1\times_{f_1} M_2\times_{f_2},...,\times f_kM_k\rightarrow N_1\times_{\rho_1} N_2\times_{\rho_2},...,\times \rho_kN_k$ defined by $\phi(p_1,...,p_k)= (\phi_1(p_1),...,\phi_k(p_k))$ is an isometric immersion. For futher study of warped product isometric immersion, we refer \cite{J9}.\\
	In Riemannian geometry, to formulate Riemannian manifolds with positive or non-negative sectional curvature is an important problem. By using Riemannian submersion, many examples of Einstein manifold have been constructed. The idea of Riemannian submersion was intoduced by Gray and O'Neill \cite{J10}. For further study of Riemannian submersion, we refer the book \cite{J13} .\\
	Recently, geometry of Riemannian warped product maps has been introduced by Meena et.al. In this paper, we introduce combination of two maps which is Clairaut warped product Riemannian maps.\\ 
	The paper is organized in the following manner. In section 2, we recall the results which are required for current research work. In section 3, firstly, we find the conditions for a regular curve to be a geodesic. Using these conditions, we find the necessary and sufficient condition for a warped product Riemannian map to be a Clairaut warped product Riemannian map. Then, we calculate the Ricci curvature tensor for Clairaut warped product Riemannian maps.

	\section{Preliminaries}
Here, we recollect some definitions and results which are the base of this paper.
	\subsection{Riemannian map and O'Neill tensor}
	\begin{definition}
		A smooth map $F$ is said to be a Riemannian map at $p\in M$ if the horizontal restriction
	$F^h_{p}:(kerF_{*p})^\perp\rightarrow (rangeF_{*p})$ is a linear isometry between the inner product spaces $((kerF_{*p})^\perp, g_M{(p)}| (kerF_{*p})^\perp)$ and $((rangeF_{*p}, g_N(q)|(rangeF_{*p})), q = F(p)$.
	\end{definition}

	O'Neill \cite{J10} defined fundamental tensor fields $T$
	and $A$ by
	\begin{equation}\label{neill A}
		A_{X'}{Y'} = \mathcal{H}\nabla^M_{\mathcal{H}X'}\mathcal{V}Y' + \mathcal{V}\nabla^M_{\mathcal{H}X'}\mathcal{H}Y',\\
	\end{equation}
	\begin{equation}\label{neill T}
		T_{X'}{Y'} =  \mathcal{H}\nabla^M_{\mathcal{V}X'}\mathcal{V}Y' +\mathcal{V}\nabla^M_{\mathcal{V}X'}\mathcal{H}Y',\\
	\end{equation}
	where $X', Y'\in\Gamma(TM), \nabla^M$ is the Levi-Civita connection of $g_M$ and $\mathcal{V}, \mathcal{H}$ denote the projections to vertical subbundle and horizontal subbundle,
	respectively. For any $X'\in \Gamma(TM)$, $T_{X'}$ and $A_{X'}$ are skewsymmetric
	operators on $(\Gamma(TM), g_M)$ reversing the horizontal and the vertical distributions.
	One can check easily that $T$ is vertical, i.e., $T_{X'} = T_{\mathcal{V} X'}$, and $A$ is horizontal, i.e., $A = A_{\mathcal{H}X'}$.\\
	We observe that the tensor field $T$ is symmetric for vertical vector fields, 
	and tensor field $A$ is anti-symmetric for horizontal vector fields. 
	Using  equation \eqref{neill A} and \eqref{neill T}, we have the following Lemma.\\
	\begin{lemma} \cite{J10}\label{Neill tensor} Let  $X, Y\in\Gamma (kerF_{*})^\perp$ and $V,W \in \Gamma(kerF_{*}).$ Then
		\begin{equation*}
			\nabla_{V}W = T_{V}W + \hat{\nabla}_{V}W,
		\end{equation*}
		\begin{equation*}
			\nabla_{V}X = \mathcal{H}\nabla_{V}X + T_{V}X,
		\end{equation*}
		\begin{equation*}
			\nabla_{X}V = A_{X}V +\mathcal{V}\nabla_{X}V,
		\end{equation*}
		\begin{equation*}
			\nabla_{X}Y = \mathcal{H}\nabla_{X}Y + A_{X}Y.
		\end{equation*}
		If $X$ is a basic vector field. then $\mathcal{H}\nabla_{V}X = A_X V.$
	\end{lemma}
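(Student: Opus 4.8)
The plan is to verify each of the four decomposition formulas by direct substitution into the defining equations \eqref{neill A} and \eqref{neill T}, exploiting the idempotence and complementarity of the projections $\mathcal{V}$ and $\mathcal{H}$ (so that $\mathcal{V}V = V$ and $\mathcal{H}V = 0$ for vertical $V$, while $\mathcal{H}X = X$ and $\mathcal{V}X = 0$ for horizontal $X$), together with the orthogonal splitting $\nabla = \mathcal{H}\nabla + \mathcal{V}\nabla$. Throughout I would read $\hat{\nabla}_V W$ as the vertical part $\mathcal{V}\nabla_V W$.

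First I would treat the purely vertical and purely horizontal inputs. For vertical $V, W$, substituting into \eqref{neill T} collapses $T_V W$ to $\mathcal{H}\nabla_V W$, since the second summand vanishes because $\mathcal{H}W = 0$; as $\hat{\nabla}_V W = \mathcal{V}\nabla_V W$, adding the two recovers $\nabla_V W$. The same mechanism applied to \eqref{neill A} with horizontal $X, Y$ reduces $A_X Y$ to $\mathcal{V}\nabla_X Y$, so that $\mathcal{H}\nabla_X Y + A_X Y = \nabla_X Y$.

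Next I would handle the two mixed formulas. For vertical $V$ and horizontal $X$, substituting into \eqref{neill T} gives $T_V X = \mathcal{V}\nabla_V X$, now the first summand dropping out since $\mathcal{V}X = 0$, whence $\mathcal{H}\nabla_V X + T_V X = \nabla_V X$. Symmetrically, \eqref{neill A} yields $A_X V = \mathcal{H}\nabla_X V$, giving $\nabla_X V = A_X V + \mathcal{V}\nabla_X V$. All four identities are thus immediate consequences of the definitions once the projection bookkeeping is carried out.

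The only step requiring more than bookkeeping is the final claim $\mathcal{H}\nabla_V X = A_X V$ for basic $X$. I would prove it by comparing $\mathcal{H}\nabla_V X$ with $\mathcal{H}\nabla_X V = A_X V$, the latter already established above, so that their difference is $\mathcal{H}[V, X]$. The main obstacle is to show this bracket has no horizontal component: here I would invoke the defining property of a basic (projectable) horizontal field $X$, namely that its bracket with any vertical field $V$ is again vertical (equivalently $F_*[V, X] = 0$). Granting this, $\mathcal{H}[V, X] = 0$ and the asserted equality follows. This is the single place where the Riemannian-map structure, rather than the mere linear algebra of the projections, genuinely enters the argument.
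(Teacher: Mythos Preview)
Your argument is correct: each of the four identities follows by direct substitution into the definitions \eqref{neill A} and \eqref{neill T} together with the splitting $\nabla = \mathcal{H}\nabla + \mathcal{V}\nabla$, and the final assertion for basic $X$ follows from torsion-freeness of $\nabla$ and the fact that $[V,X]$ is vertical whenever $X$ is basic and $V$ is vertical. The paper itself does not supply a proof of this lemma --- it is simply quoted from O'Neill \cite{J10} --- so there is no in-paper argument to compare against; your verification is the standard one and fills the gap appropriately.
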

	\subsection{Riemannian warped product manifold}
	Let $(M_1, g_{M_1})$ and $(M_2, g_{M_2})$ be two finite dimensional Riemannian manifolds with $dimM_1 = m_1, dimM_2 = m_2$
	respectively. Let $f$ be a positive smooth function on $M_1.$ The warped product
	$M = M_1 \times_f M_2$ of $M_1$ and $M_2$ is the Cartesian product $M_1 \times M_2$ with the
	metric
	$g_M = g_{M_1} + f^2g_{M_2}.$ In addition, Riemannian metric $g_M$ on $M = (M_1\times_fM_2)$ is defined for any two vecor field in the following manner.
	$$g_M(X, Y) = g_{M_1}(\pi_{1*}X, \pi_{1*}Y) + f^2(\pi_1(p_1)) g_{M_2}(\pi_{2*}X, \pi_{2*}Y)$$ where $\pi_1 : M_1 \times_f M_2 \rightarrow M_1$ defined by $(x, y) \rightarrow x$ and $\pi_{2} : M_1 \times_f M_2 \rightarrow M_2$ defined by $(x, y) \rightarrow y$ are projections and further these maps become Riemannian submersion. Further, one can observe that the fibers $\{x\}\times M_2 = \pi_{1}^{-1}(x)$
	 and the leaves $M_1\times\{y\} = \pi_{2}^{-1}(y)$
 are Riemannian submanifolds of $M_1 \times_f M_2.$ Those vectors which are tangent to leaves
	and those tangent to fibers are called horizontal and vertical respectively. If
	$v \in T_xM_1, x \in M_1$ and $y \in M_2,$ then the lift $\bar{v}$ of $v$ to $(x, y)$ is the unique
	vector $T_{(x,y)}M$ such that $(\pi_{1*})\bar{v} = v.$ For a vector field $X \in\Gamma(T{M_1}),$ the
	lift of $X$ to $M=M_1\times_fM_2$ is the vector field $\bar{X}$ whose value at $(x, y)$ is the lift of $X_x$
	to $(x, y).$ Thus the lift of $X \in\Gamma(T{M_1})$ to $M_1\times_f M_2$ is the unique element of
$\Gamma(T(M_1\times M_2))$ that is $\pi_{1}$-related to $X.$ The set of all horizontal lifts is denoted
	$\mathcal{L}_H(M_1).$ Similarly, we denote the set of all vertical lifts by $\mathcal{L}_V(M_2).$ Thus a vector field $\bar{E}$ of $M_1 \times_fM_2$ can be expressed as
	$\bar{E} = \bar{X}+\bar{U}$ with $\bar{X}\in \mathcal{L}_H(M_1)$ and $\bar{U}\in \mathcal{L}_V(M_2).$ We can prove that
	$$\pi_{1*}(\mathcal{L}_H(M_1)) = \Gamma(TM_1) , \pi_{2*}(\mathcal{L}_V(M_2)) = \Gamma(TM_2)$$
	and so $\pi_{1*}(\bar{X}) = X \in\Gamma(TM_1)$ and $\pi_{2*}(\bar{U}) = U \in\Gamma(TM_2).$
	Throughout this paper, we take same notation for vector field and its lift to warped product manifold.
	\begin{definition}\cite{J14}
	Let $\phi_i:M_i\rightarrow N_i$ be the Riemannian maps from $M_i$ to $N_i,$ where $i=1, 2.$ Then the map
	$$\phi= \phi_1\times\phi_2:M=M_1\times_fM_2\rightarrow N = N_1\times_\rho N_2$$ defined as $(\phi_1\times\phi_2)(p_1, p_2) = (\phi_1(p_1), \phi_2(p_2))$ is a Riemannian warped product map.
	\end{definition}
	\begin{lemma}\label{lemma}\cite{J6}
		Let $M = M_1 \times_f M_2$ be a warped product manifold. For any $X_1, Y_1\in \mathcal{L}(M_1)$ and $X_2, Y_2 \in \mathcal{L}(M_2),$ we have
		\begin{enumerate}
			\item $\nabla^M_{X_1} Y_1$ is the lift of $\nabla^{M_1}_{X_1} Y_1,$
			\item  $\nabla ^M_{X_1} X_2 = \nabla^M_{X_2}X_1 = \frac{X_1(f)}{f}X_2,$
			\item  nor $(\nabla^M_{X_2}Y_2) = -g_M(X_2, Y_2)(\nabla^Mln f),$
			\item  tan $(\nabla^M_{X_2}Y_2)$ is the lift of $\nabla^{M_2}_{X_2}Y_2.$	
		\end{enumerate}
		Here $\nabla,\nabla^1$ and $\nabla^2$ denote Riemannian connections on $M, M_1$ and $M_2,$
		respectively.
	\end{lemma}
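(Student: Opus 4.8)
The plan is to derive all four formulas from the Koszul formula
\[
2 g_M(\nabla^M_E F, G) = E\, g_M(F,G) + F\, g_M(E,G) - G\, g_M(E,F) + g_M([E,F],G) - g_M([E,G],F) - g_M([F,G],E),
\]
evaluated on horizontal and vertical lifts and tested separately against horizontal and vertical directions, together with three structural facts that I would record first. (i) Lifts coming from the two factors commute, i.e.\ $[X_1, X_2] = 0$ for $X_1 \in \mathcal{L}_H(M_1)$ and $X_2 \in \mathcal{L}_V(M_2)$, since $X_1$ is $\pi_1$-related to a field on $M_1$ and $\pi_2$-related to $0$, while $X_2$ is the reverse. (ii) A vertical lift annihilates every function pulled back from $M_1$ (in particular $X_2(f) = 0$), and a horizontal lift annihilates every function pulled back from $M_2$. (iii) The two distributions are $g_M$-orthogonal, $g_M(X_1, X_2) = 0$, while $g_M(X_1, Y_1) = g_{M_1}(X_1, Y_1)\circ \pi_1$ and $g_M(X_2, Y_2) = f^2\, g_{M_2}(X_2, Y_2)$.

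For part (1), I would set $E = X_1$, $F = Y_1$ and test against a horizontal $Z_1$ and a vertical $Z_2$. Against $Z_1$ every term is a pullback from $M_1$, so the right-hand side collapses to the $M_1$-Koszul formula and yields the lift of $\nabla^{M_1}_{X_1} Y_1$; against $Z_2$ all terms vanish by (i)--(iii), forcing the vertical part to be zero. Part (2) is the quickest: torsion-freeness together with (i) gives $\nabla^M_{X_1} X_2 - \nabla^M_{X_2} X_1 = [X_1, X_2] = 0$, so I compute just one of them. Testing $\nabla^M_{X_1} X_2$ against $Z_1$ kills every term, while testing against $Z_2$ leaves only $X_1\, g_M(X_2, Z_2) = X_1(f^2)\, g_{M_2}(X_2, Z_2) = 2 f\, X_1(f)\, g_{M_2}(X_2, Z_2)$, which I match against $2\,g_M\big(\tfrac{X_1(f)}{f} X_2, Z_2\big)$.

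For parts (3) and (4) I apply Koszul to $E = X_2$, $F = Y_2$. Testing against a vertical $Z_2$: the only $f$-dependence sits in the common factor $f^2$, and since every vertical lift kills $f$, that factor pulls out of all derivative terms and the right-hand side becomes $f^2$ times the $M_2$-Koszul formula, identifying $\tan(\nabla^M_{X_2} Y_2)$ with the lift of $\nabla^{M_2}_{X_2} Y_2$. Testing against a horizontal $Z_1$: orthogonality kills the first two terms and (i) kills all bracket terms, leaving only $-Z_1\, g_M(X_2, Y_2) = -2 f\, Z_1(f)\, g_{M_2}(X_2, Y_2)$, so $g_M(\nabla^M_{X_2} Y_2, Z_1) = -f\, Z_1(f)\, g_{M_2}(X_2, Y_2)$. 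Recognizing that $\nabla^M \ln f$ is horizontal (being the gradient of a pullback from $M_1$) with $g_M(\nabla^M \ln f, Z_1) = Z_1(f)/f$, I verify $-g_M(X_2, Y_2)\, g_M(\nabla^M \ln f, Z_1) = -f\, Z_1(f)\, g_{M_2}(X_2, Y_2)$, which is exactly (3).

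I expect the only genuine bookkeeping hazard to lie in (3): one must correctly identify $\nabla^M \ln f$ as the horizontal lift of $\operatorname{grad}^{M_1}(\ln f)$ and then cancel the powers of $f$ between $g_M(X_2,Y_2) = f^2 g_{M_2}(X_2,Y_2)$ and $Z_1(\ln f) = Z_1(f)/f$. Everything else is a routine term-by-term vanishing argument driven by facts (i)--(iii).
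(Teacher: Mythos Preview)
Your argument is correct and is the standard derivation via the Koszul formula; the paper, however, does not prove this lemma at all---it is simply quoted from \cite{J6} (Chen's book) without proof, so there is no ``paper's own proof'' to compare against. What you have written is essentially the proof one finds in that reference or in O'Neill \cite{J11}, and your identification of the only delicate point (tracking the powers of $f$ in part (3) and recognizing $\nabla^M\ln f$ as the horizontal lift of $\operatorname{grad}^{M_1}\ln f$) is accurate.
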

\begin{proposition}\cite{J15}
	Let $\phi_i: M_i\rightarrow N_i$ where i = 1, 2, be smooth functions. Then
	$$(\phi_1\times\phi_2)_*w = (\phi_{1*}u, \phi_{2*}v),$$
	where $w = (u, v) \in T_{(p_1, p_2)}(M_1 \times M_2).$
\end{proposition}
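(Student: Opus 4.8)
The plan is to reduce the statement to the chain rule by exploiting the canonical identification of the tangent space of a product with a direct sum. Recall that for any point $(p_1,p_2)\in M_1\times M_2$ there is a natural isomorphism $T_{(p_1,p_2)}(M_1\times M_2)\cong T_{p_1}M_1\oplus T_{p_2}M_2$, under which a vector $w$ corresponds to the pair $(u,v)$ with $u=\pi_{1*}^M w$ and $v=\pi_{2*}^M w$, where $\pi_i^M:M_1\times M_2\to M_i$ denote the natural projections of the source. The entire content of the proposition is that the differential $(\phi_1\times\phi_2)_*$ is compatible with this splitting on both sides.

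First I would record the identity linking the projections and the factor maps, namely $\pi_i^N\circ(\phi_1\times\phi_2)=\phi_i\circ\pi_i^M$ for $i=1,2$, where $\pi_i^N:N_1\times N_2\to N_i$ are the projections of the target. This holds at the level of smooth maps directly from the definition $(\phi_1\times\phi_2)(p_1,p_2)=(\phi_1(p_1),\phi_2(p_2))$. Differentiating both sides and applying the chain rule yields $\pi_{i*}^N\circ(\phi_1\times\phi_2)_*=\phi_{i*}\circ\pi_{i*}^M$ as linear maps between the relevant tangent spaces.

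Next I would simply read off the two components of the image vector. Writing $w=(u,v)$, the $i$-th component of $(\phi_1\times\phi_2)_*w$ in the target splitting is by definition $\pi_{i*}^N\big((\phi_1\times\phi_2)_*w\big)$. By the differentiated identity this equals $\phi_{i*}\big(\pi_{i*}^M w\big)$, which is $\phi_{1*}u$ for $i=1$ and $\phi_{2*}v$ for $i=2$. Hence $(\phi_1\times\phi_2)_*w=(\phi_{1*}u,\phi_{2*}v)$, as claimed. Equivalently, one may argue with a representing curve $\gamma(t)=(\gamma_1(t),\gamma_2(t))$ satisfying $\gamma_1'(0)=u$ and $\gamma_2'(0)=v$, and differentiate $t\mapsto(\phi_1(\gamma_1(t)),\phi_2(\gamma_2(t)))$ at $t=0$ componentwise.

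I do not expect any genuine obstacle here: the result is a formal consequence of the chain rule once the product tangent space is identified with the direct sum. The only point that deserves care is to state this identification explicitly, since the very notation $w=(u,v)$ presupposes it, and to use the same identification consistently on the source $M_1\times M_2$ and on the target $N_1\times N_2$.
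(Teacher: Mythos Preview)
Your argument is correct and is the standard proof via the commuting square $\pi_i^N\circ(\phi_1\times\phi_2)=\phi_i\circ\pi_i^M$ and the chain rule; the paper does not supply its own proof of this proposition but simply cites it from \cite{J15}, so there is nothing further to compare.
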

\begin{proposition}\label{Ismphm}\cite{J15}
	Let $\pi_{1}$ and $\pi_{2}$ be projections of $M_1\times_fM_2$ onto $M_1$ and $M_2$ respectively. Then $\lambda: T_{(p_1,p_2)}(M_1 \times M_2)\rightarrow T_{p_1}M_1\oplus T_{p_2}M_2,$ defined by $$\lambda(x) = (\pi_{1*}, \pi_{2*})x,$$ is an isomorphism.
\end{proposition}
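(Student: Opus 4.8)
The plan is to exploit the fact that $\lambda$ is a linear map between vector spaces of equal dimension, so that it suffices to verify injectivity (equivalently, surjectivity). Linearity is immediate: each component $\pi_{i*}$ is the differential of a smooth map at $(p_1,p_2)$, hence a linear map on tangent spaces, and a map into a direct sum is linear precisely when each of its components is. For the dimension count, recall that $\dim T_{(p_1,p_2)}(M_1\times M_2)=m_1+m_2$ while $\dim\big(T_{p_1}M_1\oplus T_{p_2}M_2\big)=m_1+m_2$ as well, so the two spaces are abstractly isomorphic and the rank--nullity theorem reduces the claim to a single statement of injectivity or surjectivity.

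First I would prove injectivity using a product chart. Choosing coordinates $(x^1,\dots,x^{m_1})$ about $p_1$ on $M_1$ and $(y^1,\dots,y^{m_2})$ about $p_2$ on $M_2$, these assemble into a chart $(x^1,\dots,x^{m_1},y^1,\dots,y^{m_2})$ about $(p_1,p_2)$ on the product, in which $\pi_1$ and $\pi_2$ are the coordinate projections. Writing a general tangent vector as $x=\sum_i a^i\,\partial_{x^i}+\sum_j b^j\,\partial_{y^j}$, one reads off $\pi_{1*}x=\sum_i a^i\,\partial_{x^i}$ and $\pi_{2*}x=\sum_j b^j\,\partial_{y^j}$. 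Hence $\lambda(x)=0$ forces every $a^i$ and every $b^j$ to vanish, so $x=0$ and $\lambda$ is injective.

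An alternative I would keep in reserve, which makes the inverse explicit, is to build a right inverse from the slice inclusions $\iota:M_1\to M_1\times M_2,\ \iota(p)=(p,p_2)$ and $\kappa:M_2\to M_1\times M_2,\ \kappa(q)=(p_1,q)$. Because $\pi_1\circ\iota=\mathrm{id}_{M_1}$, $\pi_2\circ\iota$ is constant, $\pi_1\circ\kappa$ is constant, and $\pi_2\circ\kappa=\mathrm{id}_{M_2}$, the chain rule gives $\pi_{1*}\iota_*=\mathrm{id}$, $\pi_{2*}\iota_*=0$, and symmetrically $\pi_{1*}\kappa_*=0$, $\pi_{2*}\kappa_*=\mathrm{id}$. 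Consequently, for any $(u,v)\in T_{p_1}M_1\oplus T_{p_2}M_2$ the vector $\iota_*u+\kappa_*v$ is sent by $\lambda$ to $(u,v)$, exhibiting surjectivity directly and identifying the inverse of $\lambda$.

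Since $\lambda$ is a linear injection (equivalently surjection) between spaces of the same finite dimension, it is a bijection, hence a linear isomorphism, which is the assertion. There is no genuine obstacle here; the only point demanding care is the bookkeeping with the product chart---specifically, verifying that each $\pi_{i*}$ acts as the expected coordinate projection on the basis vectors---after which both the dimension count and either of the two arguments above close the proof.
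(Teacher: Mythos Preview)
Your argument is correct: linearity of $\lambda$ is clear, the dimension count matches, and either the product-chart computation of $\ker\lambda$ or the explicit inverse via the slice inclusions $\iota,\kappa$ establishes that $\lambda$ is an isomorphism. There is nothing to object to mathematically.

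As for comparison with the paper: there is nothing to compare. The proposition is quoted from \cite{J15} (Brickell--Clark) and stated without proof in the paper, so the authors do not supply an argument of their own. Your proof is the standard one and would serve perfectly well were a proof required.
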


	\section{Clairaut warped product Riemannian maps}
	\begin{theorem}\label{thm1}
		Let $\phi = \phi_1\times \phi_2: M = M_1\times_fM_2\rightarrow N = N_1\times_\rho N_2$ be a Riemannian warped product map between two warped product Riemannian manifolds. Let $\gamma$ be a regular curve on $M.$ Then we discuss the conditions of a curve $\gamma$ to be geodesic under following cases.
		\begin{enumerate}
			\item If $\dot{\gamma}$ is vertical vector field, i.e.,  $X_1$ and $X_2$ are vertical vector fields, then
				$$ T(U, U) = 0$$
				and
		  $$ \hat{\nabla}_{U}U = 0.$$
		  \item  If $\dot{\gamma}$ is horizontal vector field, i.e.,  both $X_1$ and $X_2$ are horizontal vector fields, then
		  $$A(Y, Y) = 0$$
		  and
		  $$\mathcal{H}(\nabla^M_Y Y)= 0.$$
		  \item If $\dot{\gamma}$ is neither vertical nor horizontal vector field, i.e.,  both $X_1$ and $X_2$ are neither horizontal nor vertical, then
		$$\hat{\nabla}^M_U U+T(U, Y)+\nabla^M_Y U+A_Y Y = 0$$
		and
		$$T(U, U)+\mathcal{H}\nabla^M_U Y+\mathcal{H}\nabla^M_Y Y+A(Y, U) = 0,$$
		\end{enumerate}
	where $ X_1\in\mathcal{L}(M_1)$ and $X_2\in\mathcal{L}(M_2).$
	\end{theorem}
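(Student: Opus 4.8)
The plan is to start from the defining property that a regular curve $\gamma$ is a geodesic precisely when $\nabla^M_{\dot\gamma}\dot\gamma = 0$, and then to resolve this single vector equation into its horizontal and vertical components relative to the decomposition $TM = \ker\phi_* \oplus (\ker\phi_*)^\perp$. Since these two subbundles are orthogonal and complementary, a vector vanishes if and only if both its horizontal and its vertical part vanish; this orthogonal splitting is the mechanism that converts each geodesic equation into the displayed pair of conditions. Accordingly, I would write the velocity field as $\dot\gamma = U + Y$, where $U = \mathcal{V}\dot\gamma \in \Gamma(\ker\phi_*)$ collects the vertical contributions of $X_1, X_2$ and $Y = \mathcal{H}\dot\gamma \in \Gamma((\ker\phi_*)^\perp)$ collects the horizontal ones.

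In Case~1, where $\dot\gamma = U$ is purely vertical, I apply the first identity of Lemma~\ref{Neill tensor} to obtain $\nabla^M_U U = T_U U + \hat\nabla_U U$. Because $T_U U$ is horizontal while $\hat\nabla_U U$ is vertical, the condition $\nabla^M_U U = 0$ forces both summands to vanish separately, giving $T(U,U)=0$ and $\hat\nabla_U U = 0$. Case~2 is entirely parallel: with $\dot\gamma = Y$ horizontal, the fourth identity of Lemma~\ref{Neill tensor} gives $\nabla^M_Y Y = \mathcal{H}\nabla^M_Y Y + A_Y Y$, and the complementary nature of the two terms again splits the geodesic equation into $A(Y,Y)=0$ and $\mathcal{H}(\nabla^M_Y Y)=0$.

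The substantive case is the third, where both $U$ and $Y$ are nonzero. Here I would expand $\nabla^M_{U+Y}(U+Y)$ by bilinearity into the four terms $\nabla^M_U U$, $\nabla^M_U Y$, $\nabla^M_Y U$ and $\nabla^M_Y Y$, and apply the four identities of Lemma~\ref{Neill tensor} to each in turn. Collecting the vertical pieces, namely $\hat\nabla_U U$, $T(U,Y)$, the vertical part of $\nabla^M_Y U$, and $A_Y Y$, yields the first displayed equation, while collecting the horizontal pieces $T(U,U)$, $\mathcal{H}\nabla^M_U Y$, $A(Y,U)$ and $\mathcal{H}\nabla^M_Y Y$ yields the second, once more by invoking the orthogonality of the two subbundles.

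The only real obstacle is the bookkeeping in Case~3: one must keep straight that $T$ interchanges the horizontal and vertical distributions (and likewise $A$), so that each of the four covariant derivatives splits into one horizontal and one vertical term, and then sort every resulting summand into the correct distribution before equating components. The warped product structure of $M$ enters only through the ambient connection $\nabla^M$ and through the identification of $U,Y$ with the factor fields $X_1,X_2$; the decomposition itself is driven purely by the O'Neill tensors, so no additional input from Lemma~\ref{lemma} is required for this statement.
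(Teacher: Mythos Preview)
Your argument is correct, but it follows a genuinely different route from the paper's. The paper does not work directly with the global vertical/horizontal splitting $\dot\gamma = U + Y$ on $M$; instead it first decomposes the velocity as $\dot\gamma = (X_1,X_2)$ along the warped-product factors, invokes Lemma~\ref{lemma} to expand $\nabla^M_{\dot\gamma}\dot\gamma$ into $\nabla^{M_1}_{X_1}X_1$, $\nabla^{M_2}_{X_2}X_2$ and the cross terms involving $f$, then applies the O'Neill tensors $T_i,A_i$ of each factor map $\phi_i$ separately, and finally reassembles the pieces into the global $T,A,\hat\nabla,\mathcal{H}\nabla^M$ using Lemmas~4--6 of \cite{J7}. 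Your approach bypasses all of this by applying Lemma~\ref{Neill tensor} once on $M$, which is legitimate because the displayed conclusions are stated entirely in terms of the global tensors; it reveals that the theorem is really the standard geodesic characterisation for any Riemannian map and does not depend on the warped-product structure at all. The paper's longer computation, on the other hand, makes explicit how $T,A$ on $M$ are built from $T_1,T_2,A_1,A_2$ and the warping function $f$, information that is not needed for this theorem but is exactly what feeds into the proof of the next one on the Clairaut condition. So your last sentence, while true for Theorem~\ref{thm1} in isolation, understates why the authors chose the factor-wise route.
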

\begin{proof}
	Let $\gamma : I\rightarrow M$ be a regular curve and $\dot{\gamma} = (X_1, X_2)$ where $X_1\in \mathcal{L}(M_1)$ and $X_2\in \mathcal{L}(M_2).$  Then 
	$$\nabla^M_{\dot{\gamma}}\dot{\gamma} = \nabla^M_{(X_1, X_2)}(X_1, X_2).$$
	Making use of Lemma \ref{Ismphm}, we can write
	$$\nabla^M_{\dot{\gamma}}\dot{\gamma} =  \nabla^M_{X_1}X_1+\nabla^M_{X_1}X_2+\nabla^M_{X_2}X_1+\nabla^M_{X_2}X_2.$$
	Using Proposition \ref{lemma}, we get
	\begin{equation}\label{dotgamma}
	\nabla^M_{\dot{\gamma}}\dot{\gamma} = \nabla^{M_1}_{X_1}X_1+2\frac{X_1(f)}{f}X_2-g_M(X_2, X_2)(\nabla^Mlnf) +\nabla^{M_2}_{X_2}X_2.
	\end{equation}
	We discuss above result under various cases.
	\begin{enumerate}
		\item If both $X_1$ and $X_2$ are vertical vector fields, i.e., $X_1 = U_1$ and $X_2 = U_2.$ Then from \eqref{dotgamma}, we have
		$$\nabla^M_{\dot{\gamma}}\dot{\gamma} = \nabla^{M_1}_{U_1}U_1+2\frac{U_1(f)}{f}U_2-g_M(U_2, U_2)(\nabla^Mlnf) +\nabla^{M_2}_{U_2}U_2.$$ 
		Using O'Neill tensor \eqref{Neill tensor} in above equation, we get
		\begin{equation}\label{Vdotgamma}
			\begin{split}
		\nabla^M_{\dot{\gamma}}\dot{\gamma} &= T_1(U_1, U_1)+\hat{\nabla}^1_{U_1}U_1+2\frac{U_1(f)}{f}U_2+T_2(U_2, U_2)+\hat{\nabla}^2_{U_2}U_2\\&-g_M(U_2, U_2)(\nabla^Mlnf). 		
			\end{split}
		\end{equation}
	
	Taking vertical component of \eqref{Vdotgamma}, we get
	 $$\mathcal{V}\nabla^M_{\dot{\gamma}}\dot{\gamma} = \hat{\nabla}^1_{U_1}U_1+2\frac{U_1(f)}{f}U_2+\hat{\nabla}^2_{U_2}U_2.$$
	Using Lemma 4 of \cite{J7} in above equation, we get
	$$\mathcal{V}\nabla^M_{\dot{\gamma}}\dot{\gamma} = \hat{\nabla}_U U.$$

		Taking horizontal component of \eqref{Vdotgamma}, we get
		$$\mathcal{H}\nabla^M_{\dot{\gamma}}\dot{\gamma} = T_1(U_1, U_1)+T_2(U_2, U_2)-g_M(U_2, U_2)(\nabla^Mlnf).$$
	Using Lemma 4 of \cite{J7} in above equation, we get
	$$\mathcal{H} \nabla^M_{\dot{\gamma}}\dot{\gamma} =T(U, U). $$
	 $\gamma$ is geodesic if and only if $\mathcal{V}	\nabla^M_{\dot{\gamma}}\dot{\gamma} = 0$ and $\mathcal{H}	\nabla^M_{\dot{\gamma}}\dot{\gamma} = 0.$
	
	\item  If both $X_1$ and $X_2$ are horizontal vector fields,
	i.e., $X_1 = Y_1$ and $X_2 = Y_2.$ Then from \eqref{dotgamma}, we obtain
	$$\nabla^M_{\dot{\gamma}}\dot{\gamma} = \nabla^{M_1}_{Y_1}Y_1+2\frac{Y_1(f)}{f}Y_2-g_M(Y_2, Y_2)(\nabla^Mlnf) +\nabla^{M_2}_{Y_2}Y_2.$$ 
	Using O'Neill tensor \eqref{Neill tensor} in above equation, we get
	\begin{equation}\label{Hdotgamma}
		\begin{split}
\nabla^M_{\dot{\gamma}}\dot{\gamma}& = A_1(Y_1, Y_1)	+ \mathcal{H}_1\nabla^{M_1}_{Y_1}Y_1+2\frac{Y_1(f)}{f}Y_2+A_2(Y_2, Y_2)+\mathcal{H}_2\nabla^{M_2}_{Y_2}Y_2\\&-g_M(Y_2, Y_2)(\nabla^M lnf).
		\end{split}
	\end{equation}
Taking vertical component of above equation, we get
$$\mathcal{V}\nabla^M_{\dot{\gamma}}\dot{\gamma} = A_1(Y_1, Y_1)+A_2(Y_2, Y_2).$$
Using Lemma 6 of \cite{J7} in above equation, we get
$$\mathcal{V}\nabla^M_{\dot{\gamma}}\dot{\gamma} = A(Y, Y)$$
and taking horizontal part of \eqref{Hdotgamma}, we get
$$\mathcal{H}\nabla^M_{\dot{\gamma}}\dot{\gamma} = \mathcal{H}_1\nabla^{M_1}_{Y_1}Y_1+2\frac{Y_1(f)}{f}Y_2+\mathcal{H}_2\nabla^{M_2}_{Y_2}Y_2-g_M(Y_2, Y_2)(\nabla^M lnf).$$
Using Lemma 6 of \cite{J7} in above equation, we get
$$\mathcal{H}\nabla^M_{\dot{\gamma}}\dot{\gamma} =\mathcal{H}(\nabla^M_Y Y).$$
	If $\gamma$ is geodesic if and only if $\mathcal{V}	\nabla^M_{\dot{\gamma}}\dot{\gamma} = 0$ and $\mathcal{H}	\nabla^M_{\dot{\gamma}}\dot{\gamma} = 0.$

\item If both $X_1$ and $X_2$ are neither vertical nor horizontal vector fields, i.e., $X_1 =  U_1+Y_1$ and $X_2 = U_2+Y_2,$ then
from \eqref{dotgamma}, we obtain 
\begin{equation*}
	\begin{split}
		\nabla^M_{\dot{\gamma}}\dot{\gamma}& = \nabla^{M_1}_{U_1+Y_1}(U_1+Y_1)+2\frac{(U_1+Y_1)(f)}{f}(U_2+Y_2)+\nabla^{M_2}_{U_2+Y_2}U_2+Y_2\\&-g_M(U_2+Y_2, U_2+Y_2)(\nabla^Mlnf). 		
	\end{split}
\end{equation*}
After simplification and using O'Neill tensor \eqref{Neill tensor}, we obtain
\begin{equation}\label{VHdotGamma}
	\begin{split}
		\nabla^M_{\dot{\gamma}}\dot{\gamma} &= T_1(U_1, U_1)+\hat{\nabla}^1_{U_1}U_1+T_1(U_1, Y_1)+\mathcal{H}_1\nabla^{M_1}_{U_1}Y_1+A_1(Y_1, U_1)+\\&\mathcal{V}_1\nabla^{M_1}_{Y_1}U_1+A_1(Y_1, Y_1)+\mathcal{H}_1\nabla^{M_1}_{Y_1}Y_1+2\frac{U_1(f)}{f}U_2+2\frac{U_1(f)}{f}Y_2\\&+2\frac{Y_1(f)}{f}U_2+2\frac{Y_1(f)}{f}Y_2-g_M(U_2, U_2)(\nabla^Mlnf)-g_M(Y_2, Y_2)(\nabla^Mlnf)\\&+T_2(U_2, U_2)+\hat{\nabla}^2_{U_2}U_2+T_2(U_2, Y_2)+\mathcal{H}_2\nabla^{M_2}_{U_2}Y_2+A_2(Y_2, U_2)\\&+\mathcal{V}_2\nabla^{M_2}_{Y_2}U_2+A_2(Y_2, Y_2)+\mathcal{H}_2\nabla^{M_2}_{Y_2}Y_2.		
	\end{split}
\end{equation}	 
Taking horizontal component of above equation, we have
\begin{equation}\label{Hcase3}
	\begin{split}
	\mathcal{H}\nabla^M_{\dot{\gamma}}\dot{\gamma}& = T_1(U_1, U_1)+T_2(U_2, U_2)+\mathcal{H}_1\nabla^{M_1}_{U_1}Y_1+\mathcal{H}_2\nabla^{M_2}_{U_2}Y_2+A_1(Y_1, U_1)\\&+A_2(Y_2, U_2)+\mathcal{H}_1\nabla^{M_1}_{Y_1}Y_1+\mathcal{H}_2\nabla^{M_2}_{Y_2}Y_2+2\frac{U_1(f)}{f}Y_2+2\frac{Y_1(f)}{f}Y_2\\&-g_M(U_2, U_2)(\nabla^M lnf)-g_M(Y_2, Y_2)(\nabla^M lnf).
	\end{split}
\end{equation}
With the help of Lemma 4, 5 and 6 of \cite{J7}, we get
\begin{equation*}
	\begin{split}
T_1(U_1, U_1)+T_2(U_2, U_2) &=T(U_1, U_1)+T(U_2, U_2)+g_M(U_2, U_2)(\nabla^M lnf)\\& = T(U_1+U_2, U_1+U_2)+g_M(U_2, U_2)(\nabla^M lnf)\\& = T(U, U)+g_M(U_2, U_2)(\nabla^M lnf),		
	\end{split}
\end{equation*}
\begin{equation*}
\begin{split}
\mathcal{H}_1\nabla^{M_1}_{U_1}Y_1+\mathcal{H}_2\nabla^{M_2}_{U_2}Y_2& = \mathcal{H}\nabla^{M}_{U_1}Y_1+\mathcal{H}\nabla^{M}_{U_2}Y_2\\& = \mathcal{H}\nabla^M_{(U_1+U_2)}(Y_1+Y_2)-\frac{U_1(f)}{f}Y_2\\& = \mathcal{H}\nabla^M_U Y-\frac{U_1(f)}{f}Y_2,
\end{split}
\end{equation*}

\begin{equation*}
	\begin{split}
\mathcal{H}_1\nabla^{M_1}_{Y_1}Y_1+\mathcal{H}_2\nabla^{M_2}_{Y_2}Y_2 &= \mathcal{H}\nabla^{M}_{Y_1}Y_1+\mathcal{H}\nabla^{M}_{Y_2}Y_2+g_M(Y_2, Y_2)(\nabla^M lnf)\\& = \mathcal{H}\nabla^{M}_{(Y_1+Y_2)}(Y_1+Y_2)-2\frac{Y_1(f)}{f}Y_2+g_M(Y_2, Y_2)(\nabla^M lnf)\\& = \mathcal{H}\nabla^M_Y Y -2\frac{Y_1(f)}{f}Y_2+g_M(Y_2, Y_2)(\nabla^M lnf),		
	\end{split}
\end{equation*}
and
\begin{equation*}
\begin{split}
A_1(Y_1, U_1)+A_2(Y_2, U_2)& = A(Y_1, U_1)+A(Y_2, U_2)\\& = A(Y, U)-\frac{U_1(f)}{f}Y_2.
\end{split}
\end{equation*}

Substituting all the above equations in \eqref{Hcase3}, we get
\begin{equation*}
\begin{split}
\mathcal{H}\nabla^M_{\dot{\gamma}}\dot{\gamma}& = T(U, U)+g_M(U_2, U_2)(\nabla^M lnf)+\mathcal{H}\nabla^M_U Y-\frac{U_1(f)}{f}Y_2+\mathcal{H}\nabla^M_Y Y\\& -2\frac{Y_1(f)}{f}Y_2+g_M(Y_2, Y_2)(\nabla^M lnf)+A(Y, U)-\frac{U_1(f)}{f}Y_2+2\frac{U_1(f)}{f}Y_2\\&-g_M(U_2, U_2)(\nabla^M lnf)-g_M(Y_2, Y_2)(\nabla^M lnf)+2\frac{Y_1(f)}{f}Y_2.
\end{split}
\end{equation*}
Therefore
\begin{equation*}
\mathcal{H}\nabla^M_{\dot{\gamma}}\dot{\gamma} = T(U, U)+H\nabla^M_U Y+\mathcal{H}\nabla^M_Y Y+A(Y, U),
\end{equation*}
and taking vertical component of \eqref{VHdotGamma}, we obtain
\begin{equation}\label{Vcase3}
\begin{split}
\mathcal{V}\nabla^M_{\dot{\gamma}}\dot{\gamma}& = \hat{\nabla}^1_{U_1}U_1+T_1(U_1, Y_1)+\mathcal{V}_1\nabla^{M_1}_{Y_1}U_1+A_1(Y_1, Y_1)+2\frac{U_1(f)}{f}U_2\\&+2\frac{Y_1(f)}{f}U_2+\hat{\nabla}^2_{U_2}U_2+T_2(U_2, Y_2)+\mathcal{V}_2\nabla^{M_2}_{Y_2}U_2+A_2(Y_2, Y_2).		
\end{split}
\end{equation}
With the help of Lemma 4, 5 and 6 of \cite{J7}, we get
$$\hat{\nabla}^1_{U_1}U_1+\hat{\nabla}^2_{U_2}U_2 = \hat{\nabla}_{U_1}U_1+\hat{\nabla}_{U_2}U_2 = \hat{\nabla}^M_U U-2\frac{U_1(f)}{f}U_2,$$
$$T_1(U_1, Y_1)+T_2(U_2, Y_2) = T(U_1, Y_1)+T(U_2, Y_2)= T(U, Y)-\frac{Y_1(f)}{f}U_2,$$
$$\mathcal{V}_1\nabla^{M_1}_{Y_1}U_1+\mathcal{V}_2\nabla^{M_2}_{Y_2}U_2 = \mathcal{V}\nabla^{M}_{Y_1}U_1+\mathcal{V}\nabla^M_{Y_2}U_2 = \nabla^M_Y U-\frac{Y_1(f)}{f}U_2,$$
and
$$A_1(Y_1, Y_1)+A_2(Y_2, Y_2) = A(Y_1, Y_1)+A(Y_2, Y_2) = A_Y Y.$$
Substituting all above equations
 in \eqref{Vcase3}, we get
\begin{equation}
\begin{split}
\mathcal{V}\nabla^M_{\dot{\gamma}}\dot{\gamma} = \hat{\nabla}^M_U U+T(U, Y)+\nabla^M_Y U+A_Y Y.
\end{split}
\end{equation}

If $\gamma$ is geodesic if and only if $\mathcal{V}\nabla^M_{\dot{\gamma}}\dot{\gamma} = 0$ and $\mathcal{H}\nabla^M_{\dot{\gamma}}\dot{\gamma} = 0.$\\
This completes the proof.

	\end{enumerate}
\end{proof}
	\begin{theorem}
Let $\phi = \phi_1\times\phi_2 : M = M_1\times_fM_2\rightarrow N = N_1\times_\rho N_2$ be a warped product Riemannian map between two warped product Riemannian manifolds. Then $\phi$ is a Clairaut Riemannian warped product map with $r = e^g$ if and only if $\phi_1$ has totally umbilical fibers and $\phi_2$ has totally geodesic fibers.
	\end{theorem}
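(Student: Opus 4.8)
The plan is to reduce the Clairaut condition for the product map $\phi$ to the standard totally-umbilical characterization of the fibers, and then to split that single condition across the two factors using the warped-product connection formulas of Lemma \ref{lemma}.

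First I would recall (or re-establish) the pointwise characterization of a Clairaut Riemannian map. Let $\gamma$ be a constant-speed geodesic of $M$, write $\dot\gamma = U + Y$ with $U = \mathcal{V}\dot\gamma$, $Y = \mathcal{H}\dot\gamma$, and let $\theta(t)$ be the angle between $\dot\gamma$ and the horizontal distribution, so that $g_M(U,U) = c^2\sin^2\theta$ with $c=|\dot\gamma|$. Differentiating along $\gamma$, using $\nabla^M_{\dot\gamma}\dot\gamma = 0$ and the O'Neill relations of Lemma \ref{Neill tensor}, one obtains $\tfrac{d}{dt}g_M(U,U) = 2g_M(T_U U, Y)$; the term carrying $A$ drops out because $A_Y Y = 0$ by antisymmetry of $A$ on horizontal vectors. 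Comparing with $\tfrac{d}{dt}(e^{g}\sin\theta)=0$, which rewrites as $\tfrac{d}{dt}g_M(U,U) = -2g_M(U,U)\,\dot\gamma(g)$, and letting $U,Y$ range over all vertical/horizontal vectors (each is realized by a geodesic through the point), I get that $\phi$ is Clairaut with $r=e^{g}$ if and only if the fibers are totally umbilical,
$$T_U U = -\,g_M(U,U)\,\nabla^M g, \qquad U\in\Gamma(\ker\phi_*),$$
with $\nabla^M g$ horizontal (the purely vertical geodesics force $g$ to be fiberwise constant).

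Next I would compute the left-hand side for the product map. Writing $U = U_1 + U_2$ with $U_1\in\ker\phi_{1*}\subset\mathcal{L}(M_1)$ and $U_2\in\ker\phi_{2*}\subset\mathcal{L}(M_2)$, the cross terms $\nabla^M_{U_1}U_2=\nabla^M_{U_2}U_1=\tfrac{U_1(f)}{f}U_2$ are vertical and contribute nothing to the horizontal part, while parts (1), (3), (4) of Lemma \ref{lemma} give
$$T_U U = T_1(U_1,U_1)+T_2(U_2,U_2)-g_M(U_2,U_2)\,\nabla^M\ln f,$$
which is exactly the quantity already recorded in case (1) of Theorem \ref{thm1}. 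Substituting into the umbilical identity and using that $U_1,U_2$ are independent, the equation separates: the slice $U_2=0$ yields $T_1(U_1,U_1) = -g_{M_1}(U_1,U_1)\,\nabla^M g$, i.e. $\phi_1$ has totally umbilical fibers, while the slice $U_1=0$ yields $T_2(U_2,U_2) = g_M(U_2,U_2)\bigl(\nabla^M\ln f-\nabla^M g\bigr)$.

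The decisive step is that the two sides of this last equation lie in complementary factors: $T_2(U_2,U_2)\in TM_2$, whereas $\nabla^M\ln f$ and $\nabla^M g$ lie in $TM_1$. Hence both sides vanish separately, forcing $T_2(U_2,U_2)=0$ (so $\phi_2$ has totally geodesic fibers) and $\nabla^M g=\nabla^M\ln f$, i.e. $g=\ln f$ up to a constant, which is precisely $r=e^{g}=f$. Reading the chain of equivalences backwards gives the converse: assuming $\phi_1$ umbilical and $\phi_2$ geodesic, the choice $g=\ln f$ makes the displayed umbilical identity hold, so $\phi$ is Clairaut. The step I expect to be the main obstacle is the bookkeeping in this splitting—projecting each warped-product connection term onto the horizontal and vertical bundles of $\phi$ and keeping the $TM_1$ and $TM_2$ components apart—since it is exactly the mismatch of factors that converts one umbilical condition into ``umbilical on $M_1$, geodesic on $M_2$.''
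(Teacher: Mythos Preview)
Your proposal is correct and follows the same overall two-step plan as the paper: first show that the Clairaut condition for $\phi$ is equivalent to the fibers of $\phi$ being totally umbilical with mean curvature $-\nabla^M g$, then split this single condition across the two factors. In the first step the paper differentiates $g_M(Y,Y)$ and uses the geodesic equations of Theorem~\ref{thm1} to convert $\mathcal{H}\nabla^M_U Y+\mathcal{H}\nabla^M_Y Y$ into $-T(U,U)-A(Y,U)$, whereas you differentiate $g_M(U,U)$ directly; these are equivalent computations. The substantive difference is in the second step: the paper does not carry out the factor splitting itself but simply invokes Theorem~3 of \cite{J7}, which asserts precisely that a warped product Riemannian map has totally umbilical fibers if and only if $\phi_1$ has totally umbilical fibers and $\phi_2$ has totally geodesic fibers. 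Your explicit argument---using the decomposition $T(U,U)=T_1(U_1,U_1)+T_2(U_2,U_2)-g_M(U_2,U_2)\nabla^M\ln f$ recorded in case~(1) of Theorem~\ref{thm1} and observing that the two sides of the resulting identity live in $TM_2$ and $TM_1$ respectively---is essentially the proof of that cited result, and it has the added benefit of pinning down $g=\ln f$ up to an additive constant, a piece of information the paper's statement and proof leave implicit.
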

\begin{proof}
Let $\gamma : I\rightarrow M$ be a geodesic on $M$ with $\mathcal{V}\dot{\gamma}(t) = U(t) = (U_1(t), U_2(t))$ and $\mathcal{H}\dot{\gamma}(t) = Y(t) = (Y_1(t), Y_2(t))$ and
let $\omega(t)$ denote the angle in $[0, \pi]$ between $\dot{\gamma}(t)$ and $Y(t).$ Assuming $b = ||\dot{\gamma}(t)||^2,$ we can write

\begin{equation}\label{cos sqr}
g_{\gamma(t)}(Y(t), Y(t)) = b cos^2\omega(t),
\end{equation}
\begin{equation}\label{sin sqr}
g_{\gamma(t)}(U(t), U(t)) = b sin^2\omega(t).
\end{equation}
Taking derivative of \eqref{cos sqr} w.r.t 't', we get
$$\frac{d}{dt}g_{\gamma(t)}(Y(t), Y(t)) = -2bcos\omega(t) sin\omega(t)\frac{d\omega}{dt}$$
which gives
$$g_M(\nabla^M_{\dot{\gamma}}Y, Y) = g_M(\nabla^M_{X_1+X_2}(Y_1+Y_2), Y_1+Y_2) = -bcos\omega(t)sin\omega(t)\frac{d\omega}{dt}.$$ 

Using Lemma \ref{lemma} in above equation, we get 
\begin{equation*}
	\begin{split}
&g_M(\nabla^{M_1}_{X_1}Y_1+\frac{X_1(f)}{f}Y_2+\frac{Y_1(f)}{f}X_2+\nabla^{M_2}_{X_2}Y_2-g_M(X_2, Y_2)(\nabla^M lnf), Y)\\& = -bcos\omega(t)sin\omega(t)		
	\end{split}
\end{equation*}
which gives
\begin{equation}
\begin{split}
&g_M(\nabla^{M_1}_{U_1}Y_1+\nabla^{M_1}_{Y_1}Y_1+\frac{X_1(f)}{f}Y_2+\frac{Y_1(f)}{f}X_2+\nabla^{M_2}_{U_2}Y_2+\nabla^{M_2}_{Y_2}Y_2-g_M(X_2, Y_2)(\nabla^M lnf), Y)\\& = -bcos\omega(t)sin\omega(t).
\end{split}
\end{equation}

Using O'Neill tensor \eqref{Neill tensor} in above equation, we get
\begin{equation*}
	\begin{split}
	&g_M\big(T_1(U_1, Y_1)+\mathcal{H}_1\nabla^{M_1}_{U_1}Y_1+A_1(Y_1, Y_1)+\mathcal{H}_1\nabla^{M_1}_{Y_1}Y_1+\frac{X_1(f)}{f}Y_2+\frac{Y_1(f)}{f}X_2\\&+T_2(U_2, Y_2)+\mathcal{H}_2\nabla^{M_2}_{U_2}Y_2+A_2(Y_2, Y_2)+\mathcal{H}_2\nabla^{M_2}_{Y_2}Y_2-g_M(X_2, Y_2)(\nabla^M lnf), Y\big)\\& =  -bcos\omega(t)sin\omega(t)\frac{d\omega}{dt}
	\end{split}
\end{equation*}
which gives
\begin{equation*}
\begin{split}
&g_M(\mathcal{H}_1\nabla^{M_1}_{U_1}Y_1+\mathcal{H}_1\nabla^{M_1}_{Y_1}Y_1+\frac{X_1(f)}{f}Y_2+\frac{Y_1(f)}{f}X_2+\mathcal{H}_2\nabla^{M_2}_{U_2}Y_2+\mathcal{H}_2\nabla^{M_2}_{Y_2}Y_2-g_M(X_2, Y_2)(\nabla^M lnf), Y)\\& =  -bcos\omega(t)sin\omega(t)\frac{d\omega}{dt}.	
\end{split}
\end{equation*}

Using Lemma 5 and 6 of \cite{J7} in above equation, we get
$$g_M(\mathcal{H}\nabla^M_U Y+\mathcal{H}\nabla^M_Y Y, Y) = -bcos\omega(t)sin\omega(t)\frac{d\omega}{dt}.$$
Using Theorem \ref{thm1} in above equation, we get
\begin{equation}\label{AT}
g_M(T(U, U)+A(Y, U), Y) = bcos\omega(t)sin\omega(t)\frac{d\omega}{dt}.
\end{equation}
Since $\phi$ is a Clairaut warped product Riemannian map with $r = e^g$ if and only if $\frac{d}{dt}(e^{g\circ\gamma}sin\omega t) = 0$ this implies
$ e^{g\circ\gamma}cos\omega(t)\frac{d\omega}{dt}+e^{g\circ\gamma}sin\omega t\frac{d}{dt}(go\gamma) = 0.$\\
Multiplying by $bsin\omega(t)$, we get
$$g_M(U, U)g_M(\dot{\gamma}, \nabla^M g) = -bcos\omega(t)sin\omega(t)\frac{d\omega}{dt}.$$
From above equation and \eqref{AT}, we get
$$g_M(T(U, U), Y) = -g_M(U, U)g_M(\dot{\gamma}, \nabla^M g)$$
$$T(U, U) = -g_M(U, U)\nabla^M g$$
it gives that $\phi$ has totally umbilical fibers with mean curvature vector field $\vec{H}^{\phi} = -\nabla^M g.$ By Theorem 3 of \cite{J7}, we can conclude that $\phi$ is a Clairaut warped product Riemannian map if and only if $\phi_1$ has totally umbilical fibers and $\phi_2$ has totally geodesic fibers.


\end{proof}
%
\begin{theorem}
Let $\phi = (\phi_1, \phi_2) : M = M_1\times_f M_2\rightarrow N = N_1 \times_\rho N_2$ be a Clairaut Riemannian
warped product map between two Riemannian warped product
manifolds where $dimM_1 = m_1, dimM_2 = m_2, dimN_1 = n_1$ and $dimN_2 = n_2.$ Then Ricci curvature are defined as follows.
\begin{enumerate}
	\item $Ric(U_1, V_1) = \hat{Ric}^1(U_1, V_1)-(m_1-n_1)||\nabla^Mg||^2g_{M_1}(U_1, V_1)-g_{M_1}(U_1, V_1)div(\nabla^Mg)+g_{M_1}(A_{e_a}U_1, A_{e_a}V_1)-\frac{m_2}{f}H^f(U_1, V_1)$
	\item $Ric(U_2, V_2) = \hat{Ric}^2(U_2, V_2) +g_{M_2}(A_{\tilde{e_b}}U_2, A_{\tilde{e_b}}V_2)+(\frac{\Delta f}{f}-(m_2-1)\frac{||grad f||^2}{f^2})g_M(U_2, V_2)$
	
		\item $Ric(Y_1, Z_1) = Ric^{range\phi_{1*}}(\phi_{1*}Y_1, \phi_{1*}Z_1)-(m_1-n_1)g_{M_1}(\nabla^{M_1}_{Y_1}\nabla^Mg, Z_1)+g_{M_1}\big((\nabla_{e_i}A)_{Y_1}Z_1, e_i\big)-g_{M_1}(T_{e_i}Y_1, T_{e_i}Z_1)+g_{M_1}(A_{Y_1}e_i, A_{Z_1}e_i)+g_{N_1}\big((\nabla\phi_{1*})(Y_1, Z_1), \tau^{ker\phi_{1*}^\perp}\big)-g_{N_1}\big((\nabla\phi_{1*})(Y_1, e_a), (Z_1, e_a)\big)-\frac{m_2}{f}H^f(Y_1, Z_1)$
	\item $Ric(Y_2, Z_2) = Ric^{range\phi_{2*}}(\phi_{2*}Y_2, \phi_{2*}Z_2)+g_{N_2}\big((\nabla\phi_{2*})(Y_2, Z_2), \tau^{ker\phi_{2*}^\perp}\big)+g_{M_2}(A_{Y_2}\tilde{e_j}, A_{Z_2}\tilde{e_j})-g_{N_2}\big((\nabla\phi_{2*})(Y_2, \tilde{e_b}), (Z_2, \tilde{e_b})\big)+g_{M_2}\big((\nabla_{\tilde{e_j}}A)_{Y_2}Z_2, \tilde{e_j})+(\frac{\Delta f}{f}-(m_2-1)\frac{||grad f||^2}{f^2})g_M(Y_2, Z_2)$
	\end{enumerate}
	where $Y_i, Z_i\in\Gamma(\mathcal{H}_i), U_i, V_i\in\Gamma(\mathcal{V}_i), i=1,2.$
\end{theorem}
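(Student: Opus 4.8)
The plan is to obtain each of the four Ricci tensors by simultaneously invoking two families of curvature identities: the warped-product curvature equations for $M = M_1\times_f M_2$, which express $R^M$ in terms of $R^{M_1}$, $R^{M_2}$, and the Hessian and Laplacian of the warping function $f$ (these follow from Lemma \ref{lemma}), and the O'Neill-type curvature equations for a Riemannian map $\phi$, which express $R^M$ on vertical/horizontal arguments through the fiber curvature $\hat R$, the range curvature $Ric^{range\,\phi_{i*}}$, the fundamental tensors $A$ and $T$ of Lemma \ref{Neill tensor}, and the second fundamental form $(\nabla\phi_{i*})$. Because $\phi=\phi_1\times\phi_2$, the vertical distribution $\ker\phi_*=\ker\phi_{1*}\oplus\ker\phi_{2*}$ and the horizontal distribution both split along the product, so every curvature quantity decomposes into an $M_1$-piece, an $M_2$-piece, and the $f$-twisted cross terms $\tfrac{X_1(f)}{f}$ produced by Lemma \ref{lemma}.

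First I would fix, at a point, adapted orthonormal frames: a horizontal frame $\{e_a\}$ and a vertical frame $\{e_i\}$ on the $M_1$ factor, and a horizontal frame $\{\tilde e_b\}$ and a vertical frame $\{\tilde e_j\}$ on the $M_2$ factor, so that tracing the $(1,3)$ curvature tensor against these frames returns the Ricci tensor. For the vertical--vertical cases (items 1 and 2) I would begin from the Gauss equation of the fibers, which after inserting the O'Neill relations of Lemma \ref{Neill tensor} and tracing yields $\hat{Ric}^i$ together with the $A$-square terms $g_{M_1}(A_{e_a}U_1,A_{e_a}V_1)$ (resp. $g_{M_2}(A_{\tilde e_b}U_2,A_{\tilde e_b}V_2)$) and the $T$-tensor contributions. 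At this stage I would apply the Clairaut hypothesis established in the previous theorem: $\phi_1$ has totally umbilical fibers with $T_{U_1}V_1=-g_M(U_1,V_1)\nabla^M g$, while $\phi_2$ has totally geodesic fibers so that $T\equiv 0$ on the $M_2$ factor. Substituting the umbilical form turns the $T$-terms of item 1 into $-(m_1-n_1)\|\nabla^M g\|^2 g_{M_1}(U_1,V_1)-g_{M_1}(U_1,V_1)\,\mathrm{div}(\nabla^M g)$, and the vanishing of $T$ on the second factor accounts for the absence of such terms in item 2. The warped-product equations then append the $-\tfrac{m_2}{f}H^f$ term to the $M_1$-directions and the $(\tfrac{\Delta f}{f}-(m_2-1)\tfrac{\|\mathrm{grad}\,f\|^2}{f^2})g_M$ term to the $M_2$-directions.

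For the horizontal--horizontal cases (items 3 and 4) I would instead use the Gauss equation for the range of $\phi_{i*}$ inside $N_i$ together with the horizontal Riemannian-map identities. Tracing produces the range Ricci $Ric^{range\,\phi_{i*}}$, the derivative-of-$A$ term $g((\nabla_{e_i}A)_{Y_1}Z_1,e_i)$, the $A$- and $T$-square terms, and the second-fundamental-form terms $g_{N_i}((\nabla\phi_{i*})(Y,Z),\tau^{\ker\phi_{i*}^\perp})$ and $g_{N_i}((\nabla\phi_{i*})(Y,e_a),(Z,e_a))$ arising from the isometric inclusion of the range. The umbilical structure of the $\phi_1$-fibers again contributes the extra $-(m_1-n_1)g_{M_1}(\nabla^{M_1}_{Y_1}\nabla^M g,Z_1)$ term, obtained by differentiating the mean curvature vector $-\nabla^M g$ along $Y_1$, while the warped-product Hessian and Laplacian terms are attached exactly as in the vertical cases.

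The main obstacle will be the simultaneous bookkeeping of the two distinct orthogonal splittings of $TM$ --- the product splitting $TM_1\oplus TM_2$ and the Riemannian-map splitting $\ker\phi_*\oplus(\ker\phi_*)^\perp$ --- together with the $f$-twisted correction terms from Lemma \ref{lemma}. Concretely, the connection coefficients $\tfrac{X_1(f)}{f}$ coupling the two factors must be carried through every trace and must eventually recombine into the clean Hessian, Laplacian, and $\|\mathrm{grad}\,f\|^2$ expressions; checking that these cross terms collapse correctly, rather than leaving residual $f$-derivative remainders, is the delicate part, and it depends essentially on the Clairaut totally-umbilical/totally-geodesic dichotomy to eliminate the off-diagonal $T$-contributions.
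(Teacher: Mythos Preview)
Your plan is correct and is, in substance, exactly what the paper does: fix adapted orthonormal frames on each factor and then combine the Ricci identities for a Clairaut Riemannian map (where the totally umbilical condition on $\phi_1$-fibers converts the $T$-terms into the $\nabla^M g$ expressions and the totally geodesic condition on $\phi_2$-fibers kills them) with the standard warped-product Ricci corrections in $f$. The only difference is one of presentation: the paper does not carry out any of the bookkeeping you describe but instead, after writing down the frames, simply cites Proposition~3 of \cite{J16} for the Clairaut-side formulas and Lemma~3.6 of \cite{J8} for the warped-product contributions, so the ``main obstacle'' you anticipate is entirely absorbed into those two references.
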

\begin{proof}
	Let us consider the orthonormal basis of $\mathcal{L}(M_1)$  in the following way
	$$\{e_1,..,e_{m_1-n_1},e_{m_1-n_1+1},...,e_{m_1}\},$$
	where $\{e_i:1\leq i\leq m_1-n_1\}$ and $\{e_a:m_1-n_1+1\leq a\leq m_1\}$ are the vertical and horizontal component of lift of vertical vector fields on $M_1.$ In similar way, we define the orthonormal basis of $\mathcal{L}(M_2)$ given by
	$$\{\tilde{e_1},...,\tilde{e}_{m_2-n_2}, \tilde{e}_{m_2-n_2+1},...,\tilde{e}_{m_2}\},$$
		where $\{\tilde{e_j}:1\leq j\leq m_2-n_2\}$ and $\{\tilde{e_b}:m_2-n_2+1\leq a\leq m_2\}$ are the vertical and horizontal component. With the help of Proposition 3 of \cite{J16} and Lemma 3.6 of \cite{J8}, we get the required result.

\end{proof}
\begin{theorem}
	Let $\phi = (\phi_1, \phi_2) : M = M_1\times_f M_2\rightarrow N = N_1 \times_\rho N_2$ be a Clairaut Riemannian
	warped product map between two Riemannian warped product
	manifolds. Then the sectional curvature is given by
	\begin{enumerate}
		\item $sec(U_1, V_1)= sec^1(U_1, V_1) = \hat{sec}^1(U_1, V_1)+|\nabla^Mg|^2$
		\item $sec(U_2, V_2) = sec^2(U_2, V_2) = \frac{\hat{sec}^2(U_2, V_2)-||\nabla f||^2}{f^2}$ 
		\item $sec(Y_1, Z_1) = \frac{g_{N_1}(R^{N_1}(\phi_{1*}Y_1, \phi_{1*}Z_1, \phi_{1*}Y_1, \phi_{1*}Z_1))-g_{N_1}((\nabla \phi_{1*})(Y_1, Y_1), (\nabla \phi_{1*})(Z_1, Z_1))+g_{N_1}((\nabla \phi_{1*})(Z_1, Y_1), (\nabla \phi_{1*})(Y_1, Z_1))}{|Y_1\wedge Z_1|^2}$
		\item $sec(U_1, Y_1) = \frac{g_{M_1}(U_1, U_1)g_{M_1}(\nabla_{Y_1}\nabla g, Y_1)+(Y_1(g))^2|U_1|^2-|A_{Y_1}U_1|^2}{|U_1|^2|Y_1|^2}$
		\item $sec(Y_2, Z_2) = \frac{g_{N_2}(R^{N_2}(\phi_{2*}Y_2, \phi_{2*}Z_2, \phi_{2*}Y_2, \phi_{2*}Z_2))-g_{N_2}((\nabla \phi_{2*})(Y_2, Y_2, ), (\nabla \phi_{2*})(Z_2, Z_2))+g_{N_2}((\nabla \phi_{2*})(Z_2, Y_2, ), (\nabla \phi_{2*})(Y_2, Z_2))-\frac{||\nabla f||^2}{f^2}}{|Y_2\wedge Z_2|^2}$
		\item $sec(U_2, Y_2) = -(\frac{|A_{Y_2}U_2|^2+||\nabla f||^2}{f^2|U_2|^2|Y_2|^2})$
	\end{enumerate}
\end{theorem}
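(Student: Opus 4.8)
The plan is to prove each of the six identities by composing two independent curvature computations. First I would invoke the Bishop--O'Neill warped product curvature identities for $M = M_1\times_f M_2$ (Proposition 3 of \cite{J16}), which express $R^M$ restricted to a single factor in terms of $R^{M_1}$, $R^{M_2}$ and the warping function $f$. Then I would apply the O'Neill/Gauss-type curvature equations for each component Riemannian map $\phi_i$ (as in Lemma 3.6 of \cite{J8}), which relate $sec^{M_i}$ to the fibre curvature $\hat{sec}^i$, the curvature of $\mathrm{range}\,\phi_{i*}$ in $N_i$, and the tensors $A$, $T$, $(\nabla\phi_{i*})$. The Clairaut hypothesis enters only at the last stage, through the previous theorem, which gives that $\phi_1$ has totally umbilical fibres with mean curvature $\vec H^{\phi_1} = -\nabla^M g$ and $\phi_2$ has totally geodesic fibres, i.e. $T^{(1)}_{U_1}V_1 = -g_{M_1}(U_1,V_1)\nabla^M g$ and $T^{(2)}\equiv 0$.

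First I would record the warped product reduction. Since $f$ lives on the base $M_1$, every $2$-plane tangent to $M_1$ satisfies $g_M(R^M(\cdot,\cdot)\cdot,\cdot)=g_{M_1}(R^{M_1}(\cdot,\cdot)\cdot,\cdot)$, so $sec = sec^1$; this produces the first equalities in (1), (3), (4). For a $2$-plane tangent to the fibre $M_2$, Proposition 3 of \cite{J16} gives the correction $R^M(V,W)U = R^{M_2}(V,W)U - \tfrac{\|\nabla f\|^2}{f^2}\bigl(g_M(V,U)W-g_M(W,U)V\bigr)$, which upon polarizing yields the $-\|\nabla f\|^2/f^2$ term and the $1/f^2$ scaling visible in (2), (5), (6).

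Next I would apply the Riemannian map curvature equations to $\phi_1$ and $\phi_2$ separately, in the three plane types. For a vertical plane I use (for orthonormal pairs) $sec^i(U_i,V_i) = \hat{sec}^i(U_i,V_i) + g(T_{U_i}U_i,T_{V_i}V_i) - \|T_{U_i}V_i\|^2$; substituting the umbilical form of $T^{(1)}$ collapses the $T$-terms to $\|\nabla^M g\|^2$, giving (1), while $T^{(2)}\equiv0$ removes them entirely in (2). For a horizontal plane I use the Gauss equation of the map, $g_{M_i}(R^{M_i}(Y_i,Z_i)Z_i,Y_i) = g_{N_i}(R^{N_i}(\phi_{i*}Y_i,\phi_{i*}Z_i)\phi_{i*}Z_i,\phi_{i*}Y_i) - g_{N_i}((\nabla\phi_{i*})(Y_i,Y_i),(\nabla\phi_{i*})(Z_i,Z_i)) + g_{N_i}((\nabla\phi_{i*})(Z_i,Y_i),(\nabla\phi_{i*})(Y_i,Z_i))$, giving (3) directly and (5) after attaching the $-\|\nabla f\|^2/f^2$ correction. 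For a mixed plane I use $sec^i(U_i,Y_i)\,\|U_i\|^2\|Y_i\|^2 = g((\nabla_{Y_i}T)_{U_i}U_i,Y_i) + \|T_{U_i}Y_i\|^2 - \|A_{Y_i}U_i\|^2$; feeding in the umbilical $T^{(1)}$ turns the first two terms into $g_{M_1}(U_1,U_1)g_{M_1}(\nabla_{Y_1}\nabla g,Y_1) + (Y_1(g))^2\|U_1\|^2$ to give (4), whereas $T^{(2)}\equiv0$ leaves only $-\|A_{Y_2}U_2\|^2$, which together with the fibre correction gives (6).

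The main obstacle will be the careful bookkeeping at the interface of the two structures: keeping the sign conventions of the O'Neill tensors consistent between the warped product identities and the map curvature equations, and correctly reducing $(\nabla_{Y_1}T)_{U_1}U_1$ to the Hessian term $g_{M_1}(\nabla_{Y_1}\nabla g,Y_1)$ by using umbilicity together with $\vec H^{\phi_1}=-\nabla^M g$ (the cross terms $T_{\nabla_{Y_1}U_1}U_1$ must be shown to cancel). A second delicate point is that, since $\phi_i$ need not be a submersion, the horizontal Gauss equation must be written with $\mathrm{range}\,\phi_{i*}$ and the map's second fundamental form $(\nabla\phi_{i*})$ rather than with $N_i$ directly, and one must verify that the $A$-tensor contributions are precisely those absorbed into the $(\nabla\phi_{i*})$ terms, so that no stray horizontal term survives in (3) and (5).
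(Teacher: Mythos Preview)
Your proposal is correct and follows essentially the same strategy as the paper: combine the warped-product curvature identities with the O'Neill/Gauss--type curvature equations for each component Riemannian map, then simplify the $T$-terms via the Clairaut hypothesis (umbilical fibres for $\phi_1$, totally geodesic fibres for $\phi_2$). The paper's own proof is simply a two-line citation of Lemma~3.4 of \cite{J8} and the formulas in \cite{J18}, so your write-up is in fact a fleshed-out version of the same argument; the only discrepancy is that you invoke Proposition~3 of \cite{J16} and Lemma~3.6 of \cite{J8} where the paper points to Lemma~3.4 of \cite{J8} and \cite{J18}, but the underlying identities are the same.
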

\begin{proof}
With the help of Lemma 3.4 of \cite{J8} and from the paper \cite{J18}, we get the required result.
\end{proof}
%
%

\section{Acknowledgement}

First author is grateful to the financial support provided by CSIR (Council
of science and industrial research) Delhi, India. File
no.[09/1051(12062)/2021-EMR-I]. The second author is thankful to the
Department of Science and Technology(DST) Government of India for providing
financial assistance in terms of FIST project(TPN-69301) vide the letter
with Ref No.:(SR/FST/MS-1/2021/104).\\

	\noindent J. Yadav and G. Shanker\newline
Department of Mathematics and Statistics\newline
Central University of Punjab\newline
Bathinda, Punjab-151401, India.\newline
Email: sultaniya1402@gmail.com; gauree.shanker@cup.edu.in\newline\\
\end{document}